 \theoremstyle{plain}
 \newtheorem{thm}{Theorem}
 \newtheorem{lem}{Lemma}
 \theoremstyle{Theorem}
 \newtheorem{definition}{Definition}
 \theoremstyle{remark}
 \newtheorem{rem}{Remark}
\newcommand{\bk}{\color{black}}
\newcommand{\bb}{\color{blue}}
\newcommand{\rr}{\color{red}}
\newcommand{\dd}{\ensuremath{\displaystyle}}
\newcommand\intl{\ensuremath{\int\limits}}
\newcommand{\suml}{\ensuremath{\sum\limits}}
\newcommand{\bD}{\stackrel{\mathscr{D}}{=}}
\newcommand{\1}{\ensuremath{\mathbf{1}}}
\newcommand{\ud}{\,\mathrm{d}}
\newcommand{\bd}{\stackrel{\text{\rm def}}{=\!\!\!=}}
\newcommand{\PP}{\ensuremath{\mathsf{P}}}
\newcommand{\EE}{\ensuremath{{\mathbb{E}}}}
\newcommand*{\QEDB}{\hfill \ensuremath{\square}}%
\newcommand*{\DIAM}{\hfill \ensuremath{\Diamond}}
\newcommand*{\TR}{\hfill \ensuremath{\triangleright}}
 \author{Elmira ~Yu.~Kalimulina\footnote{Institute of Control Sciences of   Russian Academy of Sciences, Russian University of Transport} \label{note1}\\
Galina~ A.~ Zverkina \footnote{Institute of Control Sciences of   Russian Academy of Sciences, Russian University of Transport}}
\title{On some generalization of Lorden's inequality for renewal processes\thanks{
The authors are supported by the Russian Foundation for Basic Research project no. 20-01-00575\_a}}
\begin{document}
 \maketitle
 \sloppy

\section{Introduction}
Let's consider a renewal process $N_t\bd\dd\suml_{i=1}^\infty \1\left\{ \suml_{k=1}^i \xi_k\leqslant t\right\}  $, where
$\left\{\xi_1, \xi_2, ...\right\} $ are independent identically distributed (i.i.d.) positive random variables.
$N_t$ is a counting process with jumps, 
$t_k\bd\dd\suml_{k=1}^i \xi_k$ is referred to as the $k$-th jump time.  
The times $t_k$ are renewal moments of $N_t$.

Consider the {\it backward renewal time (or overshoot) at some time $t$} (See Fig.\ref{fig1:backward renewal time}):
$$B_t=t-\sum_{k=1}^{N_t} \xi _k.$$

$B_t$ is called a backward renewal time at the fixed time. So we can consider 
 $B_t$ as a random process at arbitrary time $t$. It's easy to show $B_t$ is a Markov process.

~\\

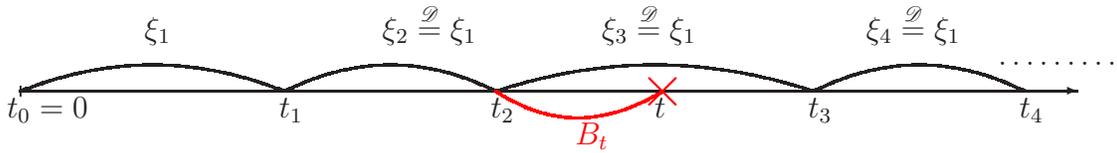
\begin{figure}[h]\label{fig1:backward renewal time}
    \centering
   \begin{picture}(400,45)
\put(0,20){\vector(1,0){400}}
\put(0,18){\line(0,1){4}}
\thicklines
{\qbezier(0,20)(50,40)(100,20)}
{\qbezier(100,20)(140,40)(180,20)}
\qbezier(180,20)(240,40)(300,20)
\qbezier(300,20)(340,40)(380,20)
\put(370,30){\ldots \ldots \ldots}
\put(98,10){$t_1$}
\put(178,10){$t_2$}
\put(298,10){$t_3$}
\put(378,10){$t_4$}
\put(47,40){$\xi_1$}
\put(137,40){$\xi_2\bD \xi_1$}
\put(220,40){$\xi_3\bD \xi_1$}
\put(320,40){$\xi_4\bD \xi_1$}
\put(-5,10){$t_0=0$}
\put(240,10){$t$}
\rr
\put(238,15){\line(1,1){10}}
\put(238,25){\line(1,-1){10}}
\qbezier(180,20)(210,0)(243,20)
\put(210,0){$B_t$}
%
\bk
\end{picture}
    \caption{{\bb$B_t$} is a backward renewal time at the fixed time $t$.}
    \label{fig1}
\end{figure}

\begin{thm}[Lorden, G. (1970) \cite{Lorden}; see, e.g. \cite{Chang}]\label{thm} 
Lorden's inequality states that the expectation of this overshoot is bounded as
\begin{equation}\label{lord}
\EE\, B_t \leqslant \frac{\EE\, \xi^2}{\EE\, \xi}.
 \end{equation}
 \QEDB
\end{thm}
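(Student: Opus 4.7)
My plan is to derive a linear renewal equation for $G(t):=\EE B_t$, bound its driving term by two elementary inequalities, and close the estimate for large $t$ by a Wald-type identity. Conditioning on the first jump $\xi_1$: if $\xi_1>t$ no renewal has occurred so $B_t=t$, contributing $t\bar F(t)$; if $\xi_1=s\le t$, the strong Markov property of the backward process at $\xi_1$ gives $B_t\bD B'_{t-s}$ for an independent fresh copy. Writing $F$ for the law of $\xi_1$ and $\bar F:=1-F$, this produces
\[
G(t) \;=\; t\bar F(t) + \int_0^{t} G(t-s)\,dF(s),\qquad G(0)=0,
\]
with resolvent solution $G(t)=\int_0^{t}(t-s)\bar F(t-s)\,dU(s)$, where $U:=\sum_{n\ge 0}F^{*n}$ is the renewal measure.

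Two elementary estimates then control the driving term $z(u):=u\bar F(u)$. Markov's inequality gives $z(u)=u\PP(\xi>u)\le\EE\xi$, and Cauchy--Schwarz gives $(\EE\xi)^{2}\le\EE\xi^{2}$, hence $z(u)\le\EE\xi\le\EE\xi^{2}/\EE\xi$ for every $u\ge 0$. Combined with the pathwise bound $B_t\le t$, this already settles the claim on the range $t\le\EE\xi^{2}/\EE\xi$.

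For $t>\EE\xi^{2}/\EE\xi$, I would apply Wald's identity to the stopping time $T:=N_t+1$, which gives $\EE\sum_{i=1}^{T}\xi_i=\EE\xi\cdot\EE T=\EE\xi\cdot U(t)$. Since $\sum_{i=1}^{T}\xi_i=t+A_t$ with $A_t$ the forward recurrence time, this yields $\EE A_t=\EE\xi\cdot U(t)-t$, and the pathwise identity $B_t+A_t=\xi_T$ reduces the target bound to $\EE\xi_T-\EE A_t\le\EE\xi^{2}/\EE\xi$.

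The main obstacle lies here: the length $\xi_T$ of the renewal interval containing $t$ is size-biased, and its expectation can itself exceed $\EE\xi^{2}/\EE\xi$ (as one sees in simple two-point examples for $F$), so the crude inequality $G(t)\le\EE\xi_T$ is insufficient. The cancellation with $\EE A_t$ must be retained, which I expect to do through the convolution representation for $G(t)$ above, combined with the normalization $\int_0^{t}\bar F(t-s)\,dU(s)=\sum_{n\ge 0}\PP(N_t=n)=1$ and the classical renewal-function bound $U(t)\le t/\EE\xi+\EE\xi^{2}/(\EE\xi)^{2}$ (Lorden's original forward-excess inequality).
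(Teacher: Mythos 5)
First, a point of fact: the paper does \emph{not} prove Theorem~\ref{thm}. Lorden's inequality is stated as a classical result with citations to Lorden (1970) and Chang (1994); the box closes the theorem statement, not a proof. The renewal-equation/key-renewal-theorem machinery appears only in the proof of the generalised Theorem~\ref{thm1}, and even there yields a different constant, $\EE\,\eta+\EE\,\eta^2/(2\EE\,\zeta)$. So there is nothing in the paper to compare your argument against; what follows assesses the sketch on its own.

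Your setup is correct and standard --- the renewal equation $G(t)=t\bar F(t)+\int_0^t G(t-s)\,dF(s)$, the resolvent $G=z\ast U$ with $z(u)=u\bar F(u)$, and the elementary bounds do settle the range $t\le\EE\,\xi^2/\EE\,\xi$. But for $t>\EE\,\xi^2/\EE\,\xi$ there is a genuine gap, which you partly acknowledge, and the tools you name do not close it. The renewal-function bound $U(t)\le t/\EE\,\xi+\EE\,\xi^2/(\EE\,\xi)^2$ is, via your own Wald step, \emph{equivalent} to $\EE\,A_t\le\EE\,\xi^2/\EE\,\xi$, i.e.\ to Lorden's forward-excess inequality; invoking it reduces the claim to another instance of the theorem you are trying to prove rather than establishing it. Even if granted, it points the wrong way: in $\EE\,B_t=\EE\,\xi_T-\EE\,A_t$ an upper bound on $\EE\,A_t$ only \emph{lower}-bounds $\EE\,B_t$, and $\EE\,\xi_T$ remains uncontrolled; while feeding the bound into the convolution representation via the natural estimate $\PP\{B_t>x\}\le\bar F(x)\,U(t-x)$ gives only $\EE\,B_t\le\frac{1}{\EE\,\xi}\int_0^t(t-x)\bar F(x)\,dx+\EE\,\xi^2/\EE\,\xi$, whose first term is of order $t$ --- no better than the trivial $B_t\le t$. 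Closing the argument requires a structurally different ingredient, such as the subadditivity $U(b)-U(a)\le U(b-a)$ of the renewal measure, a second Wald-type identity applied to $S_T^2$, or the change-of-measure argument in the cited Chang (1994), none of which your proposal states or derives.
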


Consider here also {\it forward renewal time (or undershot ) at some time $t$} (See Fig.\ref{fig1:forward renewal time}):
$$W_t=t-\sum_{k=1}^{N_t+1} \xi _k.$$
~\\

\begin{figure}[h]\label{fig1:forward renewal time}
    \centering
   \begin{picture}(400,45)
\put(0,20){\vector(1,0){400}}
\put(0,18){\line(0,1){4}}
\thicklines
{\qbezier(0,20)(50,40)(100,20)}
{\qbezier(100,20)(140,40)(180,20)}
\qbezier(180,20)(240,40)(300,20)
\qbezier(300,20)(340,40)(380,20)
\put(370,30){\ldots \ldots \ldots}
\put(98,10){$t_1$}
\put(178,10){$t_2$}
\put(298,10){$t_3$}
\put(378,10){$t_4$}
\put(47,40){$\xi_1$}
\put(137,40){$\xi_2\bD \xi_1$}
\put(220,40){$\xi_3\bD \xi_1$}
\put(320,40){$\xi_4\bD \xi_1$}
\put(-5,10){$t_0=0$}
\put(240,10){$t$}
\rr
\put(238,15){\line(1,1){10}}
\put(238,25){\line(1,-1){10}}
\qbezier(243,20)(271,0)(300,20)
\put(270,0){$W_t$}
%
\bk
\end{picture}
    \caption{{\rr$ W_t$} is a forward renewal time at the fixed time $t$.}
    \label{fig1}
\end{figure}
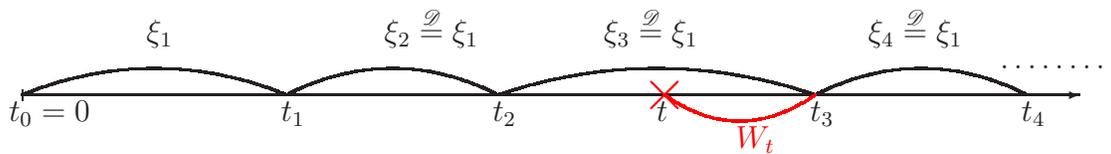

The renewal intervals can be dependent, and also may be different distributed.
In this paper the random variables $\left\{\xi^\ast_1, \xi^\ast_2, ...\right\} $
are non-negative and not assumed to be i.i.d.
The counting  process $N^\ast_t\bd \dd \suml_{i=1}^\infty \1\left\{\suml_{k=1}^i \xi^\ast_k\leqslant t\right\} $ is called as {\it generalised renewal process} in this case.
Our goal is to 
generalise the Lorden's inequality for that process, and to formulate the conditions under which this  {\it generalised } Lorden's inequality is hold.

\section{Assumptions}
First, we have to define the {\it generalised intensity function}. 
Recall the intensity (hazard rate) function definition (in a classical sense \cite{GK,Smith}). Let $\PP\{\cdot \}$ to be the probability of at least one recovery occurs  in the interval $[t,~t+\Delta]$. It can be expressed via some function $\varphi(t)$:
\begin{eqnarray*}
\PP\{\mbox{at least one recovery  in the interval }[t, t+\Delta]\} 
\\
= \frac{F(t+\Delta)- F(t)}{1-F(t)}
=\intl_t^{t+\Delta}\ \varphi(s)\ud s=\varphi(t)\Delta + o(\Delta), 
\end{eqnarray*}
\begin{definition}
$\varphi(t)$ is called the  intensity (hazard rate) function for a renewal process.\TR
\end{definition}

It is obvious  that  a continuous  random variable  is uniquely defined by its cumulative distribution functions, or by its density distribution functions, or by its intensity:
$$
F(s)=1-\exp\left(\intl_0^s(-\varphi(u))\ud u\right), \qquad F'(s)=\varphi(s)\exp\left(\intl_0^s(-\varphi(u))\ud u\right).
$$
Functions $F(s), F'(s), \varphi(s)$ uniquely define each other.

This above definition of intensity is formulated for absolutely continuous distributions. We 
will consider the more generalised case of mixed random variables\footnote{The case of singular random variables  is not considered due there are no practical applications for that case.} with distribution functions that may have a countable number of  jumps.

{\bf Denote} $\varphi(a)\bd-\ln \big(F(a+0)-F(a-0) \big)\delta(0)$
when  $F(a-0)\neq F(a+0)$. The function  $\delta(\cdot)$ is the Dirac delta function.

{\bf Suppose that}
$$
f(s)=
\begin{cases}
F'(s), & \mbox{if } F'(s) \mbox{exists};\\
0, & \mbox{in the other case}.
\end{cases}
$$
{\it This is not ``classical'' distribution density!!!}

\begin{definition} The generalized intensity is defined by: 
$$\varphi(s)\bd \dd\frac{f(s)}{1-F(s)}-\suml_{i}\delta(s-a_i)\ln\big(F(a_i+0)-F(a_i-0) \big),$$ where  $\{a_i\}$ --- is the set of all points of discontinuity of a function 
$F(s)$.\TR
\end{definition}

{\bf Denote} the generalized intensity of a random variable  $\xi$ as
$$
Intensity_\xi(x). \eqno{\TR}
$$

~

~
The following lemma for generalised intensity is hold

\begin{lem} If $\xi\perp\!\!\!\perp\eta$, then
$$
Intensity_{\min\{\xi;\eta}(x)=Intensity_\xi(x)+Intensity_\eta(x).\eqno{\TR}
$$
\end{lem}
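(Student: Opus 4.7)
The plan is to reduce this lemma to the classical additivity of hazard rates, which itself follows from the multiplicativity of survival functions under $\min$ of independent random variables. By independence of $\xi$ and $\eta$,
\begin{equation*}
\PP(\min\{\xi,\eta\}>x)=\PP(\xi>x)\,\PP(\eta>x),
\end{equation*}
so, writing $S_\zeta(x)\bd 1-F_\zeta(x)$, one has $S_{\min\{\xi,\eta\}}=S_\xi\cdot S_\eta$. Since the generalized intensity $\varphi$ is morally the distributional logarithmic derivative $-(\ln S)'$, taking $\ln$ of this identity turns the product into a sum and gives the claim. The proof is then a careful bookkeeping on the absolutely continuous and pure-jump parts of the distribution functions, treated separately.

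For the absolutely continuous part, at any point $x$ at which both $F_\xi$ and $F_\eta$ are differentiable, differentiating $F_{\min\{\xi,\eta\}}=1-S_\xi S_\eta$ yields $f_{\min\{\xi,\eta\}}(x)=f_\xi(x)S_\eta(x)+S_\xi(x)f_\eta(x)$, and hence
\begin{equation*}
\frac{f_{\min\{\xi,\eta\}}(x)}{1-F_{\min\{\xi,\eta\}}(x)}=\frac{f_\xi(x)}{1-F_\xi(x)}+\frac{f_\eta(x)}{1-F_\eta(x)},
\end{equation*}
which is precisely additivity for the continuous summand of $\varphi$. For the jump part, at an atom $a$ of $F_\xi$ at which $F_\eta$ is continuous one has $S_{\min\{\xi,\eta\}}(a-0)=S_\xi(a-0)S_\eta(a)$ and $S_{\min\{\xi,\eta\}}(a+0)=S_\xi(a+0)S_\eta(a)$, so the $\delta(x-a)$-contribution to $\varphi_{\min\{\xi,\eta\}}$ at $a$ comes entirely from $\xi$ and matches the one in $\varphi_\xi$; the symmetric statement holds at atoms of $F_\eta$ disjoint from those of $F_\xi$. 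At a point $a$ where both functions have an atom, the multiplicativity
\begin{equation*}
\frac{S_{\min\{\xi,\eta\}}(a+0)}{S_{\min\{\xi,\eta\}}(a-0)}=\frac{S_\xi(a+0)}{S_\xi(a-0)}\cdot\frac{S_\eta(a+0)}{S_\eta(a-0)}
\end{equation*}
again gives additivity of the logarithmic jump weights.

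The main obstacle, and the place where care is required, is matching the specific shape of the jump weight $-\ln(F(a+0)-F(a-0))$ from the Definition with what emerges naturally from the computation, namely $-\ln\bigl(S(a+0)/S(a-0)\bigr)$. Outside common atoms of $F_\xi$ and $F_\eta$ these two prescriptions agree, so on the generic case the argument runs cleanly; in the general case the jump term is best read in terms of survival ratios, for which additivity is an immediate consequence of $S_{\min\{\xi,\eta\}}=S_\xi S_\eta$. Combining the absolutely continuous and jump contributions by linearity of the distributional derivative then completes the proof. \QEDB
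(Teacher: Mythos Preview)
The paper states this lemma without proof, so there is nothing in the source to compare your argument against. Your route via the survival-function identity $S_{\min\{\xi,\eta\}}=S_\xi S_\eta$ and the reading of the generalized intensity as the distributional derivative $-(\ln S)'$ is the standard one and is certainly what the authors intend.

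There is, however, a real slip in your handling of the discrete part. You assert that the paper's jump weight $-\ln\bigl(F(a+0)-F(a-0)\bigr)$ and the natural weight $-\ln\bigl(S(a+0)/S(a-0)\bigr)$ ``agree outside common atoms of $F_\xi$ and $F_\eta$''. They do not. If $a$ is an atom of $F_\xi$ at which $F_\eta$ is continuous, then
\[
F_{\min\{\xi,\eta\}}(a+0)-F_{\min\{\xi,\eta\}}(a-0)=S_\eta(a)\bigl(F_\xi(a+0)-F_\xi(a-0)\bigr),
\]
so the paper's prescription gives $\min\{\xi,\eta\}$ the jump weight $-\ln S_\eta(a)-\ln\bigl(F_\xi(a+0)-F_\xi(a-0)\bigr)$, which exceeds the sum of the individual weights by $-\ln S_\eta(a)>0$ whenever $\PP\{\eta>a\}<1$. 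Thus with the paper's literal definition the lemma actually \emph{fails} at every atom of one variable lying inside the support of the other, not only at common atoms. Your argument becomes a complete proof only if the jump weight is read as $-\ln\bigl(S(a+0)/S(a-0)\bigr)$, which is the unique choice making $1-F(x)=\exp\bigl(-\int_0^x\varphi(s)\ud s\bigr)$ hold across jumps and is almost surely what the authors meant. You should state this correction explicitly rather than suggest the discrepancy is confined to common atoms.
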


Further we will formulate our results for generalized intensity. All three cases (cumulative probability density probability and generalized intensity functions) will be used for a random variable definition.   

~

~

Consider the sequence $\left\{\xi_1, \xi_2, ...\right\} $ of random variables.

Let's make the following assumptions (1)--(5):
\begin{enumerate}
\item $\xi_j=\min\{\zeta_j;\theta_j\}$, where  $\{\zeta_j\}$ -- i.i.d. r.v., defined by the generilized intensity $\varphi_i(s)$, and $\zeta_i \perp\!\!\!\perp \theta_j$ for all  $i$, $j$; $\theta_j$ is defined by generalized intensity   $\mu_j$;\footnote{The random variables are supposed to be non-identically distributed. But the formulated results still hold  for the condition $\xi_i \bD \xi_j$  for all $i,~j$, $i\neq j$, or not for all $i,~j$.} 

\item  The generalized measurable non-negative function $Q(s)$ exists, \\
and for all  $s\geqslant 0$~~$\boxed{\varphi(s)+\mu_j(s)=\lambda_i(s) \leqslant  Q(s)}$;

\item $\dd\intl_0^\infty \varphi(s) \ud s = \infty$, and $\dd\intl_0^\infty \left( x^{k-1}  \exp\left(-\intl_0^x \varphi(s)\ud s\right)\right)\ud x<\infty $ for some $~k\geqslant 2$;

\item
$Q(s)$ is locally bounded function for some neighbourhood of zero;

\item $\varphi(s)>0$ a. e. if $s>T \geqslant 0$.

\end{enumerate}

\begin{definition}\label{def1}
If conditions 1--4 are satisfied, then the counting process
\begin{equation}\label{def} 
  N_t\bd\suml_{i=1}^\infty \1\left\{\suml_{k=1}^i \xi_k\leqslant t\right\}
\end{equation}
is called a {\it generalized renewal process}.
\DIAM
\end{definition}

\begin{rem}
If $T>0$, the process \eqref{def} is the delayed process.\TR
\end{rem}

\begin{rem}\label{r0}
The condition (3) ensures that:
 $$\EE\,\xi_i>0,\qquad \mbox{Var}\,\xi_i^2>0.$$
 \TR
\end{rem}

\begin{rem}\label{r1}
 If the condition 4 holds, then:
 $$F_i(t)=1-\dd\intl_0^t \exp({-\varphi_i(s)})\ud s \geqslant 1- \frac{1}{(1+t)^c}\;\; \Rightarrow \;\;\exists\; \EE\,\xi_i^2 < \infty.$$
 \TR
\end{rem}

\begin{rem}
The mixed  random variable  is uniquely defined by its cumulative distribution functions, or by its intensity.
These functions $F(s), \varphi(s)$ uniquely define each other. \TR
\end{rem}

\section{Auxiliary results}

Let's consider random variables $\zeta$, $\xi_i$, $\eta$ with the following intensities and distribution functions:
\begin{eqnarray}
\bullet \quad G(x)\bd\PP\{\zeta\leqslant x\}=1-\exp\left(-\dd\intl_0^x Q(s)\ud s\right);  \; Q(s)  \mbox { is an intensity};\label{G}
\\
\bullet \quad F_i(x)\bd  \PP\{\xi_i\leqslant x\}=1-\exp\left(-\dd\intl_0^x \lambda_i(s)\ud s\right);  \mbox { where  $\lambda_i(s)$ -- intensity};\label{Fi}
\\
\bullet \quad \Phi(x)\bd\PP\{\eta\leqslant x\}=1-\exp\left(-\dd\intl_0^x \varphi(s)\ud s\right); \; \varphi(s) \mbox {  is an intensity}.\label{Phi}
\end{eqnarray}
The condition 1 ensures that $G(s)=\PP\{\zeta\leqslant s\}\geqslant  F_i(s)=\PP\{\xi_i\leqslant s\} \geqslant  \Phi(s) = \PP\{\eta\leqslant s\}$, or $\zeta\prec \xi_i \prec \eta$ -- ordered by distribution \cite{SHT}.

The condition 3 ensures that $\EE\,\eta^k<\infty \Rightarrow \EE\,\zeta^k<\infty$ and $\EE\,\xi_i^k<\infty$.

The condition 4 ensures that $\EE\,\zeta>0$.

The condition 5 ensures that $\Phi'(x)>0$ a.e. if $s>T$.

~

The condition $\zeta\prec \xi_i \prec \eta$ gives some useful auxiliary result.

\begin{lem}
The following inequalities hold for the generalized renewal process:
 $$G(s)^{\ast n}\geqslant F_{\xi_1+ \xi_2+ \cdots+\xi_n}(s)\geqslant \Phi^{\ast n}(s),$$
 or $$\suml_{i=1}^n\zeta_i \prec \suml_{i=1}^n \xi_i  \prec \suml_{i=1}^n\eta_i. \eqno{\TR}$$
\end{lem}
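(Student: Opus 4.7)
The plan is to lift the one-dimensional stochastic ordering $\zeta\prec\xi_i\prec\eta$ (established just before the lemma from the inequality $G(s)\geqslant F_i(s)\geqslant\Phi(s)$) to an ordering of the $n$-fold sums by means of a monotone quantile coupling of the three sequences on a common probability space.

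First I would take $U_1,\ldots,U_n$ to be independent uniform random variables on $[0,1]$ on a common probability space and set $\widetilde\zeta_i:=G^{-1}(U_i)$, $\widetilde\xi_i:=F_i^{-1}(U_i)$, $\widetilde\eta_i:=\Phi^{-1}(U_i)$, using the usual generalized right-continuous inverses. Then $\widetilde\zeta_i\bD\zeta_i$, $\widetilde\xi_i\bD\xi_i$, $\widetilde\eta_i\bD\eta_i$, and within each of the three vectors the coordinates are independent. Because $G(s)\geqslant F_i(s)\geqslant\Phi(s)$ for every $s$, the generalized inverses satisfy $G^{-1}(u)\leqslant F_i^{-1}(u)\leqslant\Phi^{-1}(u)$ for every $u\in(0,1)$; consequently $\widetilde\zeta_i\leqslant\widetilde\xi_i\leqslant\widetilde\eta_i$ almost surely for each $i=1,\ldots,n$.

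Summing these $n$ almost-sure inequalities gives $\sum_{i=1}^n\widetilde\zeta_i\leqslant\sum_{i=1}^n\widetilde\xi_i\leqslant\sum_{i=1}^n\widetilde\eta_i$ almost surely. The three sums have distributions $G^{\ast n}$, $F_1\ast\cdots\ast F_n$ and $\Phi^{\ast n}$, respectively, and pointwise ordering of the coupled random variables yields the opposite pointwise ordering of their cumulative distribution functions, which is exactly the desired inequality $G^{\ast n}(s)\geqslant F_{\xi_1+\cdots+\xi_n}(s)\geqslant\Phi^{\ast n}(s)$. The argument has no real obstacle; the one point needing care is the identification $F_{\xi_1+\cdots+\xi_n}=F_1\ast\cdots\ast F_n$, which relies on the $\xi_i$'s being independent in the paper's construction (guaranteed once the auxiliary variables $\theta_j$ in $\xi_j=\min\{\zeta_j,\theta_j\}$ are taken to be mutually independent, in addition to being independent of the $\zeta_j$'s). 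Should genuine dependence between the $\xi_i$'s be admitted, one simply replaces the independent uniforms by a correlated sample reproducing the true joint law of $(\xi_1,\ldots,\xi_n)$; the monotone coupling and the summed inequality then remain valid word for word.
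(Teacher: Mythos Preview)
The paper states this lemma without proof, so there is no argument of the paper's to compare against; under independence of the $\xi_i$ your quantile coupling is the standard, correct route and yields both inequalities at once.

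Your final sentence, however, does not work. If $(U_1,\dots,U_n)$ is a correlated sample reproducing the joint law of $(\xi_1,\dots,\xi_n)$, then $\widetilde\zeta_i=G^{-1}(U_i)$ and $\widetilde\eta_i=\Phi^{-1}(U_i)$ inherit that dependence, so $\sum_i\widetilde\zeta_i$ and $\sum_i\widetilde\eta_i$ no longer have the convolution laws $G^{\ast n}$ and $\Phi^{\ast n}$, and the summed pointwise inequality no longer compares $F_{\xi_1+\cdots+\xi_n}$ with those convolutions. In fact the left inequality $G^{\ast n}\geqslant F_{\xi_1+\cdots+\xi_n}$ can genuinely fail under conditions~1--5 when the $\theta_j$ are dependent: take $n=2$, $\varphi\equiv1$, $Q\equiv2$, and $\theta_1=\theta_2$ a single $\mathrm{Exp}(1)$ variable independent of the i.i.d.\ $\mathrm{Exp}(1)$ variables $\zeta_1,\zeta_2$ of condition~1; then each $\xi_j$ is marginally $\mathrm{Exp}(2)$, but one computes $G^{\ast2}(1/2)=1-2e^{-1}\approx0.264<0.319\approx\PP\{\xi_1+\xi_2\leqslant1/2\}$. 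The right inequality, by contrast, does survive dependence directly from condition~1 (since $\xi_j=\min\{\zeta_j,\theta_j\}\leqslant\zeta_j$ pointwise with the $\zeta_j$ there i.i.d.\ of law $\Phi$), but for the full lemma independence of the $\xi_i$ is genuinely needed, and your proposed workaround cannot supply it.
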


\section{The main result}
Let us consider the  counting process \eqref{def}, where $\xi_j$ -- r.v., that may be dependent.

Let $\PP\{\xi_j\leqslant s\}=F_j(s)$;   $F_j$ and $F_i$ may not be equal.

\begin{thm}\label{thm1} 
If the conditions 1--5 are satisfied, then the following inequality for the process defined by the Definition  \ref{def1}
 holds: 
\begin{equation}\label{osn}
\EE\,B_t\leqslant  \EE\,  \eta +  \frac{\EE\,\eta^2}{2\EE\,\zeta}
\end{equation}
 and 
 \begin{equation}\label{osn1}
  \EE\,W_t\leqslant  \EE\,  \eta +  \frac{\EE\,\eta^2}{2\EE\,\zeta},
\end{equation}
where
$ \qquad\dd \EE\,  \eta^2= \intl_0^\infty x^2 \ud \Phi(x);\qquad
 \EE\,  \zeta= \intl_0^\infty x^2 \ud G(x)$ -- see (\ref{G}), (\ref{Phi}). \QEDB 
\end{thm}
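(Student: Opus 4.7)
The plan is to estimate $\EE B_t$ by writing it as $\intl_0^t \PP(B_t > x)\,\ud x$ (valid since $0 \leqslant B_t \leqslant t$) and bounding the tail via the intensity lower bound from Assumption~2. First, observe that $\{B_t > x\}$ coincides with the event ``no $\xi$-renewal in $(t-x, t]$''. Decomposing by the index $n$ of the last renewal before $t - x$ and using the conditional survival estimate
\[
\PP(\xi_{n+1} > u \mid \FFF_n) = \exp\Bigl(-\intl_0^u \lambda_{n+1}(s)\,\ud s\Bigr) \leqslant \bar\Phi(u),
\]
which follows from $\lambda_{n+1}(\cdot) \geqslant \varphi(\cdot)$, I would obtain
\[
\PP(B_t > x) \leqslant \intl_0^{t-x} \bar\Phi(t - s)\,\ud U^\xi(s), \qquad U^\xi(s) := 1 + \EE N^\xi_s.
\]
An application of Fubini then yields $\EE B_t \leqslant \intl_0^t (t-s)\bar\Phi(t-s)\,\ud U^\xi(s)$.

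Next, I would split this integral into the $\delta_0$-contribution $t\bar\Phi(t)$ and the remaining $\intl_{0+}^t (t-s)\bar\Phi(t-s)\,\ud m^\xi(s)$ (where $m^\xi(s)=\EE N^\xi_s$); these are the two summands of the target bound. Monotonicity of $\bar\Phi$ gives $t\bar\Phi(t) \leqslant \intl_0^t \bar\Phi(u)\,\ud u \leqslant \EE\eta$, producing the first summand. For the renewal integral I would invoke the auxiliary Lemma $\suml_i\zeta_i \prec \suml_i\xi_i$ to conclude that $m^\xi(s) \leqslant m^\zeta(s)$ pointwise, and then apply the classical Lorden inequality (Theorem~\ref{thm}) to the i.i.d.\ $\zeta$-process (intensity $Q$, distribution $G$) to obtain $m^\zeta(s) + 1 \leqslant s/\EE\zeta + \EE\zeta^2/(\EE\zeta)^2$. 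After integration by parts and the substitution $u = t - s$, this leads to
\[
\intl_{0+}^t (t-s)\bar\Phi(t-s)\,\ud m^\xi(s) \leqslant \frac{1}{\EE\zeta}\intl_0^\infty u\bar\Phi(u)\,\ud u = \frac{\EE\eta^2}{2\EE\zeta},
\]
completing the estimate for $\EE B_t$.

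For \eqref{osn1} the plan is parallel: $\EE W_t = \intl_0^\infty \PP(W_t > x)\,\ud x$, and the same decomposition gives $\PP(W_t > x) \leqslant \intl_0^t \bar\Phi(t + x - s)\,\ud U^\xi(s)$; Fubini then produces $\EE W_t \leqslant \intl_0^t \bar\Phi_I(t-s)\,\ud U^\xi(s)$, where $\bar\Phi_I(v) := \intl_v^\infty \bar\Phi(u)\,\ud u$ satisfies $\bar\Phi_I(0) = \EE\eta$ and $\intl_0^\infty \bar\Phi_I(u)\,\ud u = \EE\eta^2/2$, and the same splitting yields the identical bound. The hardest step will be making the substitution $\ud m^\xi(s) \mapsto \ud s/\EE\zeta$ rigorous: since Lorden's bound on $m^\zeta$ carries a positive additive constant $\EE\zeta^2/(\EE\zeta)^2 - 1$, the integration by parts generates boundary terms that must be absorbed into the $\EE\eta$ summand, and controlling them will require the polynomial tail decay built into Assumption~3 together with a careful sign analysis of the derivative of $(t-s)\bar\Phi(t-s)$ on $[0,t]$.
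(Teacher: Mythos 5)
Your proposal follows the same skeleton as the paper's proof: decompose $\PP(B_t>x)$ by the last renewal before $t-x$, dominate the tail of $\xi_{n+1}$ by $\bar\Phi$ using the intensity bound $\lambda_{n+1}\geqslant\varphi$, replace the $\xi$-renewal function by the $\zeta$-renewal function via the convolution comparison $F_{\xi_1+\cdots+\xi_n}\leqslant G^{\ast n}$, and then integrate $\int_0^t(t-s)\bar\Phi(t-s)$ against the $\zeta$-renewal measure. The one genuine point of divergence is the last step: the paper invokes Smith's Key Renewal Theorem to evaluate $\int_0^t(t-s)\bar\Phi(t-s)\ud H_G(s)$ as $\EE\eta^2/(2\EE\zeta)$, treating an asymptotic limit as an identity for finite $t$, whereas you instead bound the renewal function $m^\zeta$ by the classical Lorden inequality $m^\zeta(s)+1\leqslant s/\EE\zeta+\EE\zeta^2/(\EE\zeta)^2$ and then integrate by parts. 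Your route is conceptually cleaner in that Lorden gives a uniform bound rather than a limit, so it is closer to what is actually needed to produce an inequality valid for every fixed $t$.

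However, there is a gap you acknowledge only partially, and it is more serious than a boundary-term bookkeeping issue. Two of your substitutions require passing a pointwise inequality between non-decreasing functions ($m^\xi\leqslant m^\zeta$, and then $m^\zeta(s)\leqslant s/\EE\zeta+C$) through a Stieltjes integral $\int_0^t g(s)\ud m(s)$, which after integration by parts becomes $-\int_0^t g'(s)m(s)\ud s$ because $g(t)=0$ and $m(0+)=0$. This transfer is only licit if $g'$ has a constant sign on $[0,t]$, and with $g(s)=(t-s)\bar\Phi(t-s)$, i.e.\ $g'(s)=-\bar\Phi(t-s)+(t-s)\Phi'(t-s)$, the sign genuinely changes for any non-degenerate $\Phi$: $g$ vanishes at both endpoints of $[0,t]$ (when $t\bar\Phi(t)$ is small) and is positive in between, so $g'$ is positive near $s=t$ and negative near $s=0$. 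On the region where $g'>0$ the substitution $m\mapsto$ (larger bound) moves the estimate in the wrong direction. The same difficulty is hidden in the paper's own step $I_i\leqslant\int(1-\Phi)\ud G^{\ast i}$ — the stochastic ordering $G^{\ast i}\geqslant F_{\xi_1+\cdots+\xi_i}$ does not by itself give that integral inequality for a kernel that is non-decreasing in $s$, so your proposal is not worse than the paper here — but your closing remark suggests this is a technicality absorbable into the $\EE\eta$ term, and that claim is not substantiated. To close the gap you would need a different mechanism (e.g.\ a pathwise coupling argument, or splitting $g$ into monotone pieces and bounding each piece separately, possibly at the cost of a larger constant), not merely ``careful sign analysis.''
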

\begin{proof}

In many books on the renewal theory, there is the standard estimation for the distribution of the backward renewal time:
\begin{multline*}
\PP(B_t > x)= 1-F_1(t+x)+\sum_i^\infty \intl_0^{x-t} 1-F_{i+1}(x-s) \ud F_{\xi_1+\xi_2+\cdots+\xi_i}(s)\leqslant 
\\
\leqslant  1-\Phi(t+s)- \sum_{i=1}^\infty \intl_0^{x-t}1- \Phi(x-s) \ud F_{\xi_1+\xi_2+\cdots+\xi_i}(s)=
\\
=1-\Phi(t+s)- \sum_{i=1}^\infty I_i,
\end{multline*}
where $\dd \Phi(s)$ is defined in (\ref{Phi}),
 and  $F_{\xi_1+\xi_2+\cdots+\xi_i}(s)=\PP\left\{\xi_1+\xi_2+\cdots+\xi_i\leqslant s\right\} .$

Here, we denote:
\begin{enumerate}
    \item $\zeta$ is a random variable with the distribution function $ G(x)=1-\dd\intl_0^x \exp ({-Q(s)})\ud s$;
    \item $\eta$ is a random variable with the distribution function $\Phi(x)=1-\exp\left(-\dd\intl_0^x \varphi(s)\ud s\right)$.
\end{enumerate}

So, $G(s)\geqslant F_i(s)\geqslant \Phi(s);$ and $\zeta \prec  \xi_i  \prec \eta$ (distribution order).

Note, that
$$
G(s)\geqslant F_i(s)\geqslant \Phi(s);\quad \zeta \prec  \xi_i  \prec \eta\quad (\mbox{ distribution order }), $$
 then
\begin{equation}\label{sravn}
    G^{*n}(x) \geqslant F_{\xi_1+\xi_2+\cdots+\xi_i}(x) \geqslant \Phi^{*n}(x).
\end{equation}

Now, let us estimate
$$
I_i= \intl_0^{x-t} 1-F_{i+1}(x-s)\ud F_{\xi_1+\xi_2+\cdots+\xi_i}(s)\leqslant
\intl_0^{x-t} 1-\Phi(x-s)\ud F_{\xi_1+\xi_2+\cdots+\xi_i}(s)=\ldots
$$
and, by  integration  by  parts, we have
\begin{multline*}
I_i\leqslant (1-\Phi(x-s))F_{\xi_1+\xi_2+\cdots+\xi_i}(s)\bigg|_0^{x-t} - \intl_0^{x-t} F_{\xi_1+\xi_2+\cdots+\xi_i}(s) \ud  1-\Phi(x-s)=
\\
(1-\Phi(t))F_{\xi_1+\xi_2+\cdots+\xi_i}(t) + \intl_0^{x-t} F_{\xi_1+\xi_2+\cdots+\xi_i}(s) \ud  \Phi(x-s)\leqslant
\end{multline*}
now, using \eqref{sravn},
$$I_i\leqslant(1-\Phi(t))G^{n\ast}(s)+ \intl_0^{x-t} G^{n\ast}(s) \ud  (\Phi(x-s)-1) $$
anew, by integration  by parts, we have
$$
I_i\leqslant \intl_0^{x-t}1- \Phi(x-s)\ud G^{n*}(s).
$$

Thus,
\begin{multline*}
\PP\{B_t > x\}= 1-F_1(t+x)+\sum_i^\infty \intl_0^{x-t} 1-F_{i+1}(x-s)\ud F_{\xi_1+\xi_2+\cdots+\xi_i}(s) \leqslant \\
1-\Phi(x)+\sum_i^\infty \intl_0^{x-t} 1-\Phi(x-s)\ud G^{n\ast}(s)=\\
1-\Phi(x) +\intl_0^{x-t} 1-\Phi(x-s)\ud  H_G(s),
\end{multline*}
where $H_G(s)$ is a renewal function of the ``classic'' renewal process with the distribution of the renewal times $G(s)$; $H_G(s)\bd \suml_{n=1}^\infty G^{\ast n}(s)$.
Then,
$$
\EE\,  B_t\leqslant \left(\intl\limits_0^\infty ( 1-\Phi(x))+ \intl_0^{x-t} 1-\Phi(x-s)\ud H_G(s)\right) \ud x.
$$
So,
\begin{multline*}
\EE\,  B_t\leqslant \intl\limits_0^\infty  \left( 1-\Phi(x)\right) dx+\intl\limits_0^\infty \intl_0^{x-t} 1-\Phi(x-s)\ud H_G(s) \ud x=
\\
\EE\, \eta+\iint\limits_{\left\{\stackrel{0\leqslant x<\infty,}{0\leqslant s<x-t}  \right\}} (1-\Phi(x-s))H'_G(s) \ud x \ud s.
\end{multline*}
Now, we apply Smith's Key Renewal Theorem (see \cite{Smith}):
\begin{multline*}
\intl_0^\infty \intl_0^{x-t} 1- \Phi(x-s)\ud  H_G(s) (\ud t)= \\ \\
\intl_0^\infty \ud  H_G(s)  \intl_0^{x-t} 1- \Phi(x-s) (\ud t)=
\intl_0^\infty (x-s) [1- \Phi(x-s)] \ud H_G(s)= \\ \\
\mbox{ apply Smith's Theorem }\\ \\
=\frac{1}{\EE\,  \zeta} \intl_0^\infty (x+\theta)[1- \Phi(x+\theta)] \ud \theta.
\end{multline*}
Now,
$$
 \frac{1}{\EE\,  \zeta}\intl_0^\infty (x+\theta)[1- \Phi(x+\theta)] \ud (\theta +x)=\frac{1}{\EE\,  \zeta}
 \frac{1}{2} \intl_0^\infty 1- \Phi (v) \ud v^2 = \\
\frac{1}{\EE\,  \zeta}\times\frac{\EE\, (\eta^2)}{2}.
$$
The inequality (\ref{osn}) is proved.

The analogous calculations for the equation
\begin{multline*}\label{nedo}
\PP\{W_t>x\} =
\\
=\PP\{\xi_1>t+x\}+\sum_{i=1}^\infty \PP\big\{\{\xi_{i+1}>(t-s+x)\}\& \{\xi_1+\xi_2+\ldots+\xi_i\leqslant s\} \& \{s\leqslant t\}\big\} \leqslant
\\
\leqslant\PP\{\xi_1>x\}+\sum_{i=1}^\infty \PP\big\{\{\xi_{i+1}>(t-s)\}\& \{\xi_1+\xi_2+\ldots+\xi_i\leqslant s\} \& \{s\leqslant t\}\big\}
\end{multline*}
prove the inequality (\ref{osn1}).

The Theorem \ref{thm1} is proved.
\end{proof}


\section{Conclusion}
This fact is very important because \eqref{lord} is a uniform bound for any {\it fixed} (non-random) time $t$.
It was used for construction of strong bounds for some queueing systems end reliability systems.
For this aim, it can consider for some stochastic regenerative process described the behaviour of technical system an embedded renewal process, and to study the convergence rate of extended renewal Markov process. But in many practical situations, the counting process is not strongly renewal in a classic means.

\section{Acknowledgement}
The authors are grateful to Prof.L.G.Afanasyeva, Prof.S.A.Pirogov and  Prof.A.D.Manita for valuable advices and discussions.

\end{document}